\documentclass{amsart}
\usepackage{graphicx}
\usepackage{url}
\usepackage{amssymb}
\usepackage[dvipdfmx]{hyperref}
\usepackage{amssymb}
\usepackage{url}
\usepackage{cite}

\setcounter{MaxMatrixCols}{10}

\newtheorem{theorem}{Theorem}[section]
\theoremstyle{plain}

\newtheorem{corollary}{Corollary}[section]

\newtheorem{lemma}{Lemma}[section]

\numberwithin{equation}{section}
\hyphenation{Ko-ku-shi-ka-n Fi-gue-ro-a I-chi-shi-ma Mun-ta-ner Ba-tle Cen-te-no}

\begin{document}
\title[ Non-isomorphic graphs with common degree sequences ]{ Non-isomorphic graphs with common degree sequences }
\author{Rikio Ichishima}
\address{Department of Sport and Physical Education, Faculty of Physical
Education, Kokushikan University, 7-3-1 Nagayama, Tama-shi, Tokyo 206-8515,
Japan}
\email{ichishim@kokushikan.ac.jp}
\author{Francesc A. Muntaner-Batle}
\address{Graph Theory and Applications Research Group, School of Electrical
Engineering and Computer Science, Faculty of Engineering and Built
Environment, The University of Newcastle, NSW 2308 Australia }
\email{famb1es@yahoo.es}
\date{March 12, 2023}
\subjclass{Primary 05C07; Secondary 05C60, 05C76}
\keywords{vertex degree, degree sequence, isomorphism problems in graph theory, graph operation}

\begin{abstract}
For all positive even integers $n$, graphs of order $n$ with degree sequence 
\begin{equation*}
S_{n}:1,2,\dots,n/2,n/2,n/2+1,n/2+2,\dots,n-1
\end{equation*} 
naturally arose in the study of a labeling problem in \cite{IMO}.
This fact motivated the authors of the aforementioned paper to study these sequences and as a result of this study they proved that there is a unique graph of order $n$ realizing $S_{n}$ for every even integer $n$.
The main goal of this paper is to generalize this result.
\end{abstract}
\dedicatory{In memory of Susana Clara L\'{o}pez Masip}
\maketitle

\section{Introduction}
Unless stated otherwise, the graph-theoretical notation and terminology used here will follow Chartrand and Lesniak \cite{CL}. 
In particular, we assume that graphs considered in this paper are simple, that is, without loops or multiple edges.  
To indicate that a graph has \emph{vertex set} $V$ and \emph{edge set} $E$, 
we write $G=\left(V,E\right)$. 
To emphasize that $V$ and $E$ are the vertex set and edge set of a graph $G$, 
we will write $V$ as $V\left(G\right)$ and $E$ as $E\left(G\right)$.

The \emph{removal of a vertex} $v$ from a graph $G$ results in that subgraph $G-v$ of $G$ consisting of all vertices of $G$ except $v$ and all edges not incident with $v$. 
Thus, $G-v$ is the maximal subgraph of $G$ not containing $v$. 
On the other hand, if $v$ is not adjacent in $G$, \emph{the addition of vertex} $v$ results in the smallest supergraph $G+v$ of $G$ containing the vertex $v$ 
and all edges incident with $v$.
The \emph{union} $G\cong G_{1}\cup G_{2}$ has $V\left(G\right)=V\left(G_{1}\right) \cup V\left(G_{2}\right)$ 
and $E\left(G\right)=E\left(G_{1}\right) \cup E\left(G_{2}\right)$. 

The \emph{degree} of a vertex $v$ in a graph $G$ denoted by $\deg_{G} v$ is the number of edges incident with $v$. 
A sequence $s : d_{1}, d_{2},\dots , d_{n}$ of nonnegative integers is called a \emph{degree sequence} of a graph $G$ of order $n$ if the vertices of $G$ can be labeled $v_{1}, v_{2},\dots, v_{n}$ so that $\deg v_{i} = d_{i}$ for $1 \leq i\leq n$. 
Throughout this paper, we write the degree sequence of a graph as an increasing sequence. 
A finite sequence $s$ of nonnegative integers is \emph{graphical} if there exists some graph that realizes $s$,
that is, $s$ is a degree sequence of some graph.

The concepts of graph isomorphism and isomorphic graphs are also crucial for the development of this paper, 
and although they are very basic in graph theory, we introduce them as a matter of completeness. 
Let $G_{1}=\left(V_{1},E_{1}\right)$ and $G_{2}=\left(V_{2},E_{2}\right)$ be two graphs. 
They are \emph{isomorphic} (written $G_{1}\cong G_{2}$) 
if there exists a bijective function $\phi:V_{1} \rightarrow V_{2}$ such that $xy \in E_{1}$ 
if and only if $\phi\left(x\right)\phi\left(y\right) \in E_{2}$. In this case, the function $\phi$ is called an \emph{isomorphism} from $G_{1}$ to $G_{2}$.

The following two lemmas regarding isomorphism of graphs are very elementary and fundamental, 
but nevertheless, necessary for the proof of our main result of this paper. 
Hence, we state and prove them next.

\begin{lemma}
\label{basic1}
Let $G_{1}=\left(V_{1},E_{1}\right)$ and $G_{2}=\left(V_{2},E_{2}\right)$ be two graphs of order $n$
for which there exist unique vertices $v_{1}\in V_{1}$ and $v_{2}\in V_{2}$ such that 
\begin{equation*}
\deg_{G_{1}}v_{1}=\deg_{G_{2}}v_{2}=n-1 \text{.}
\end{equation*}
Then $G_{1}\cong G_{2}$ if and only if $G_{1}-v_{1}\cong G_{2}-v_{2}$.
\end{lemma}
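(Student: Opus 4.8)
The plan is to establish the two implications separately, relying throughout on the single structural fact that a vertex of degree $n-1$ in a graph of order $n$ is adjacent to every other vertex. Since an isomorphism is a degree-preserving bijection, the uniqueness hypothesis will force any isomorphism between $G_{1}$ and $G_{2}$ to send $v_{1}$ to $v_{2}$, and this is precisely what lets me pass between the full graphs and the vertex-deleted subgraphs in both directions.

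For the forward implication, I would suppose $G_{1}\cong G_{2}$ and fix an isomorphism $\phi:V_{1}\rightarrow V_{2}$. First I would note that $\phi$ preserves degrees: for any $x\in V_{1}$, the adjacency condition $xy\in E_{1}$ if and only if $\phi(x)\phi(y)\in E_{2}$, together with the bijectivity of $\phi$, yields $\deg_{G_{2}}\phi(x)=\deg_{G_{1}}x$. In particular $\deg_{G_{2}}\phi(v_{1})=n-1$, so by the uniqueness of the vertex of degree $n-1$ in $G_{2}$ we must have $\phi(v_{1})=v_{2}$. Restricting $\phi$ to $V_{1}\setminus\{v_{1}\}$ then gives a bijection onto $V_{2}\setminus\{v_{2}\}$, and since deleting $v_{1}$ (respectively $v_{2}$) removes exactly the edges incident with it, the restricted map is an isomorphism from $G_{1}-v_{1}$ to $G_{2}-v_{2}$.

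For the converse, I would suppose $\psi:V_{1}\setminus\{v_{1}\}\rightarrow V_{2}\setminus\{v_{2}\}$ is an isomorphism from $G_{1}-v_{1}$ to $G_{2}-v_{2}$, and define $\phi:V_{1}\rightarrow V_{2}$ by $\phi(v_{1})=v_{2}$ and $\phi(x)=\psi(x)$ for $x\neq v_{1}$, which is evidently a bijection. To verify that $\phi$ is an isomorphism I would split into two cases. For an edge $xy$ with $x,y\neq v_{1}$, adjacency is preserved because $\psi$ is an isomorphism of the deleted subgraphs. For the remaining edges, the key point is that $\deg_{G_{1}}v_{1}=n-1$ makes $v_{1}$ adjacent to every other vertex, while $\deg_{G_{2}}v_{2}=n-1$ makes $v_{2}$ adjacent to every other vertex; hence both $v_{1}x\in E_{1}$ and $\phi(v_{1})\phi(x)=v_{2}\psi(x)\in E_{2}$ hold for every $x\neq v_{1}$, so the equivalence is automatic. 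Therefore $\phi$ is an isomorphism and $G_{1}\cong G_{2}$.

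The only genuinely delicate step is the invocation of uniqueness in the forward direction: it is what guarantees $\phi(v_{1})=v_{2}$, rather than merely that $\phi(v_{1})$ is \emph{some} vertex of degree $n-1$. Everything else is a routine check that the edge relation is respected, and I expect the converse to require no uniqueness at all, only the observation that degree $n-1$ is equivalent to universal adjacency.
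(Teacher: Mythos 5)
Your proposal is correct and follows essentially the same route as the paper: the forward direction uses degree preservation plus the uniqueness hypothesis to force $\phi\left(v_{1}\right)=v_{2}$ and then restricts, while the converse extends the isomorphism of the deleted graphs by sending $v_{1}$ to $v_{2}$ and exploits the fact that both are universal vertices (the paper phrases this via its vertex-addition operation $H+v$, but the argument is identical). Your closing remark that uniqueness is needed only in the forward direction is also accurate.
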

\begin{proof}
First, assume that $G_{1}\cong G_{2}$. 
Then there exists an isomorphism $\phi:V_{1} \rightarrow V_{2}$.
Since $v_{i}$ ($i=1,2$) are the only vertices of $V_{i}$ with degree $n-1$ and each isomorphism preserves degrees, 
it follows that $\phi\left(v_{1}\right)=v_{2}$.
Thus, if we consider $G_{1}-v_{1}$ and $G_{2}-v_{2}$, 
it follows that the function $\phi^{\prime}:V_{1}\backslash\{v_{1} \} \rightarrow V_{2}\backslash\{v_{2}\}$ defined by
$\phi^{\prime}\left(a\right)=\phi\left(a\right)$ for all $a\in V_{1}\backslash\{v_{1}\}$ is well defined and bijective.
Furthermore, $ab\in E\backslash\{v_{1}x\mid x\in V_{1}\backslash\{v_{1}\}\}$ if and only if 
$\phi^{\prime}\left(a\right)\phi^{\prime}\left(b\right)\in E\backslash\{v_{2}x\mid x\in V_{2}\backslash\{v_{2}\}\}$.
This implies that $\phi^{\prime}:V_{1}\backslash\{v_{1} \} \rightarrow V_{2}\backslash\{v_{2}\}$ is an isomorphism and hence $G_{1}\cong G_{2}$.

Next, assume that $H_{1}=\left(V_{1}^{\prime},E_{1}^{\prime}\right)$ and $H_{2}=\left(V_{2}^{\prime},E_{2}^{\prime}\right)$
are two isomorphic graphs with an isomorphism $\phi:V_{1}^{\prime} \rightarrow V_{2}^{\prime}$.
Also, let $v_{1}$ and $v_{2}$ be two new vertices and consider two graphs $H_{1}+v_{1}$ and $H_{2}+v_{2}$. 
We show that  $H_{1}+v_{1}\cong H_{2}+v_{2}$.
To do this, consider the function
$\phi^{\prime}:V\left(H_{1}+v_{1} \right) \rightarrow V\left(H_{2}+v_{2} \right)$ defined by
\begin{equation*}
\phi^{\prime}\left( v\right) =\left\{ 
\begin{tabular}{ll}
$\phi\left( v\right) $ & if $v\in V_{1}^{\prime}$ \\ 
$v_{2}$ & if $v=v_{1}$.%
\end{tabular}
\right.
\end{equation*}
We will show that $\phi^{\prime}$ is an isomorphism from $H_{1}+v_{1}$ to $H_{2}+v_{2}$.
Since $\phi$ is an isomorphism from $H_{1}$ to $H_{2}$, 
it follows that $ab\in E\left(H_{1}+v_{1}\right)$ and $\{a,b\} \cap\{v_{1}\}=\emptyset$ 
if and only if $\phi^{\prime}\left(a\right) \phi^{\prime}\left(b\right)\in E\left(H_{2}+v_{2}\right)$.
On the other hand, if $av_{1} \in E\left(H_{1}+v_{1}\right)$ for all $a\in V_{1}^{\prime}$ 
and $bv_{2}\in E\left(H_{2}+v_{2}\right)$ 
for all $b\in V_{2}^{\prime}$, 
then $\phi^{\prime}\left(a\right) \phi^{\prime}\left(v_{1}\right)=\phi\left(a\right)v_{2}\in E\left(H_{2}+v_{2}\right)$.
This implies that $\phi^{\prime}$ is an isomorphism from $H_{1}+v_{1}$ to $H_{2}+v_{2}$ so that $H_{1}+v_{1}\cong H_{2}+v_{2}$.
\end{proof}

\begin{lemma}
\label{basic2}
Let $G_{1}=\left(V_{1},E_{1}\right)$ and $G_{2}=\left(V_{2},E_{2}\right)$ be two graphs. 
If $v_{1}$ and $v_{2}$ are two new vertices, 
then $G_{1}\cong G_{2}$ if and only if $G_{1}\cup v_{1}\cong G_{2}\cup v_{2}$.
\end{lemma}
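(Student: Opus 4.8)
The plan is to prove the two implications separately, using throughout that $G_i\cup v_i$ merely adjoins the isolated vertex $v_i$ (a vertex of degree $0$) while leaving the edge set $E_i$ unchanged. The forward direction is routine and parallels the additive half of Lemma \ref{basic1}. Starting from an isomorphism $\phi:V_1\rightarrow V_2$ witnessing $G_1\cong G_2$, I would extend it to $\phi':V_1\cup\{v_1\}\rightarrow V_2\cup\{v_2\}$ by declaring $\phi'(v_1)=v_2$ and $\phi'(a)=\phi(a)$ for $a\in V_1$. This is plainly a bijection, and since $v_1$ is incident with no edge of $G_1\cup v_1$ and $v_2$ with none of $G_2\cup v_2$, every edge of $G_1\cup v_1$ already lies in $E_1$; hence the edge-preservation of $\phi'$ reduces at once to that of $\phi$, giving $G_1\cup v_1\cong G_2\cup v_2$.

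For the reverse direction, suppose $\psi:V_1\cup\{v_1\}\rightarrow V_2\cup\{v_2\}$ is an isomorphism. The essential difference from Lemma \ref{basic1} is that the distinguished vertices now have degree $0$, and isolated vertices need not be unique, so one cannot simply invoke degree-uniqueness to force $\psi(v_1)=v_2$. What degree preservation does give is that $\psi(v_1)$ is an isolated vertex of $G_2\cup v_2$. If $\psi(v_1)=v_2$ there is nothing further to arrange; otherwise $\psi(v_1)=w_2$ for some isolated vertex $w_2\neq v_2$. Since $v_2$ and $w_2$ are both isolated, the transposition $\tau$ interchanging them and fixing all other vertices is an automorphism of $G_2\cup v_2$, and I would replace $\psi$ by $\tau\circ\psi$, an isomorphism $G_1\cup v_1\rightarrow G_2\cup v_2$ now satisfying $(\tau\circ\psi)(v_1)=v_2$.

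Thus, without loss of generality, I may assume $\psi(v_1)=v_2$. The restriction $\psi'=\psi|_{V_1}:V_1\rightarrow V_2$ is then a bijection, and because neither $v_1$ nor $v_2$ meets any edge, the edge sets of $G_1$ and $G_2$ coincide with those of $G_1\cup v_1$ and $G_2\cup v_2$ respectively; hence $\psi'$ preserves adjacency and $G_1\cong G_2$. I expect the only real obstacle to be precisely this normalization of $\psi$ so that it carries $v_1$ to $v_2$, which is where the non-uniqueness of isolated vertices must be absorbed by the transposition $\tau$; once that is in place, the adjacency check is immediate.
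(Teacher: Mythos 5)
Your proposal is correct and follows essentially the same route as the paper: extend the isomorphism by $v_{1}\mapsto v_{2}$ for the forward direction, and for the converse normalize the isomorphism so that $v_{1}$ maps to $v_{2}$ and then restrict it to $V_{1}$. In fact, your transposition argument supplies the precise justification for the step the paper dispatches with ``we may assume, without loss of generality, that $\phi\left(v_{1}\right)=v_{2}$,'' so your write-up is, if anything, more complete than the paper's.
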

\begin{proof}
First, assume that $G_{1}\cong G_{2}$.
Then there exists an isomorphism $\phi:V_{1} \rightarrow V_{2}$.
Now, consider the function $\phi^{\prime}:V_{1}\cup\{v_{1}\} \rightarrow V_{2}\cup\{v_{1}\}$ defined by
\begin{equation*}
\phi^{\prime}\left( v\right) =\left\{ 
\begin{tabular}{ll}
$\phi\left( v\right) $ & if $v\in V_{1}$ \\ 
$v_{2}$ & if $v=v_{1}$.%
\end{tabular}
\right.
\end{equation*}
Since no edge of the form $av_{1}$ exists in $G_{1}\cup v_{1}$ 
and  no edge of the form $bv_{2}$ exists in $G_{2}\cup v_{2}$,
it follows that $\phi^{\prime}$ is an isomorphism from $G_{1}\cup v_{1}$ to $G_{2}\cup v_{2}$ 
and hence $G_{1}\cup v_{1}\cong G_{2}\cup v_{2}$.

Next, assume that $G_{1}\cup v_{1}\cong G_{2}\cup v_{2}$. 
Then there exists an isomorphism $\phi:V_{1} \rightarrow V_{2}$.
Since the image under $\phi$ of any isolated vertex is an isolated vertex,
we may assume, without loss of generality, that $\phi\left(v_{1}\right)=v_{2}$.
This implies that the function $\phi^{\prime}:V_{1} \rightarrow V_{2}$ defined by 
$\phi\left(v\right)=v$ for all $v\in V_{1}$ is clearly well defined, bijective and an isomorphism from $G_{1}$ to $G_{2}$.
Therefore, $G_{1}\cong G_{2}$.
\end{proof}

\section{Main results}
With the information provided in the introduction, we are ready to present our main results.

Let $S_{0}: 0 \leqq a_{1} \leqq a_{2} \leqq \cdots \leqq a_{n-1} \leqq a_{n}$ be a graphical sequence. 
If we assume that there exist exactly $k$ ($k\geq 1$) graphs that realize $S_{0}$, then we have the following result.

\begin{theorem}
\label{main}
The sequences

$S_{0}^{\left(1\right)}:1,a_{1}+1,a_{2}+1,\dots,a_{n}+1,n+1;$

$S_{0}^{\left(2\right)}:1,2,a_{1}+2,a_{2}+2,\dots,a_{n}+2,n+2,n+3;$

$S_{0}^{\left(3\right)}:1,2,3,a_{1}+3,a_{2}+3,\dots,a_{n}+3,n+3,n+4, n+5;$
\begin{equation*}
\vdots
\end{equation*}

$S_{0}^{\left(i\right)}:1,2,3,\dots,i,a_{1}+i,a_{2}+i,\dots,a_{n}+i,n+i,n+i+1,\dots, n+2i-1$
\begin{equation*}
\vdots
\end{equation*}

\noindent are all graphical. Furthermore, there exist exactly $k$, $k\geq 1$, 
connected non-isomorphic graphs that realize each one of the sequences
$S_{0}^{\left(1\right)}, S_{0}^{\left(2\right)},\dots, S_{0}^{\left(i\right)},\dots$.

\end{theorem}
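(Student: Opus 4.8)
The plan is to recognize each $S_{0}^{\left(i\right)}$ as the outcome of one simple two-step vertex addition applied $i$ times to a realization of $S_{0}$. Writing $S_{0}^{\left(0\right)}=S_{0}$, I would first record the following arithmetic once and carefully, since it underlies everything: a graph of order $m=n+2\left(i-1\right)$ with degree sequence $S_{0}^{\left(i-1\right)}$, after (i) adjoining an isolated vertex and then (ii) adjoining a vertex adjacent to all $m+1$ existing vertices, acquires degree sequence $1,2,\dots,i,a_{1}+i,\dots,a_{n}+i,n+i,\dots,n+2i-1$, which is exactly $S_{0}^{\left(i\right)}$. Indeed, the isolated vertex supplies the new $0$ that becomes the leading $1$, every old degree rises by $1$, and the final dominating vertex has degree equal to the new order minus one, namely $n+2i-1$. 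Graphicality of every $S_{0}^{\left(i\right)}$ then follows by induction: $S_{0}$ is graphical by hypothesis, and both operations can be carried out on any graph.

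Next I would settle connectedness and simultaneously verify the hypothesis of Lemma \ref{basic1}. In any realization of $S_{0}^{\left(i\right)}$ the top term $n+2i-1$ equals the order minus one; moreover it is attained by exactly one vertex, because the block $n+i,n+i+1,\dots,n+2i-1$ consists of distinct consecutive integers while $a_{n}+i\leq\left(n-1\right)+i<n+2i-1$ for $i\geq 1$ (using $a_{n}\leq n-1$). That unique vertex is adjacent to all others, so every realization is connected, and it furnishes precisely the unique maximum-degree vertex required by Lemma \ref{basic1}.

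The heart of the argument is the count, by induction on $i$ with base case $S_{0}$, which has exactly $k$ realizations. Let $N_{i}$ be the number of isomorphism classes of graphs realizing $S_{0}^{\left(i\right)}$. I would exhibit a bijection between these classes and those realizing $S_{0}^{\left(i-1\right)}$ as a composite of two class-level bijections. First, Lemma \ref{basic1} applied to the unique universal vertex gives $G_{1}\cong G_{2}$ iff $G_{1}-v_{1}\cong G_{2}-v_{2}$; deleting that vertex from any realization of $S_{0}^{\left(i\right)}$ subtracts $1$ from every surviving degree and yields a graph whose degree sequence is $S_{0}^{\left(i-1\right)}$ with a leading $0$, and re-adjoining a universal vertex recovers every realization, so this is a bijection on isomorphism classes. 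Second, Lemma \ref{basic2} identifies the isomorphism classes of that sequence with those of $S_{0}^{\left(i-1\right)}$ by peeling off an isolated vertex. Composing gives $N_{i}=N_{i-1}$, whence $N_{i}=k$ for every $i$, and all the realizations counted are connected by the previous paragraph.

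The main obstacle I anticipate is not any single deep step but the discipline of the surjectivity and well-definedness checks that upgrade Lemmas \ref{basic1} and \ref{basic2} from statements about individual pairs of graphs into genuine bijections on isomorphism classes. One must confirm that a unique maximum-degree vertex truly exists in \emph{every} realization (where the inequality $a_{n}\leq n-1$ is essential), that after its removal a degree-$0$ vertex is always present so that Lemma \ref{basic2} applies, and that the two deletions, performed on arbitrary realizations, land exactly on the intended sequences with no over- or under-counting. Lining up the three interlocking degree blocks after the uniform ``subtract $1$ from each surviving degree'' shift is the only place where an arithmetic slip could derail the induction, so I would verify that reduction explicitly, including the degenerate cases $i=1$ and $a_{1}=0$.
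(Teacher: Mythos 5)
Your proposal is correct and follows essentially the same route as the paper: graphicality by iteratively adjoining an isolated vertex and a universal vertex, connectedness from the universal vertex, and the count by induction on $i$, using Lemma \ref{basic1} and Lemma \ref{basic2} to pass between the isomorphism classes realizing $S_{0}^{\left(i\right)}$, then $0,S_{0}^{\left(i-1\right)}$, then $S_{0}^{\left(i-1\right)}$. If anything, you are more explicit than the paper on points it leaves implicit, notably the uniqueness of the maximum-degree vertex (via $a_{n}\leq n-1$) required by Lemma \ref{basic1}, and the surjectivity of the two deletion maps on isomorphism classes.
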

\begin{proof}
We start by showing that each sequence $S_{0}^{\left(i\right)}$ is graphical for any positive integer $i$.
To do this, 
we only need to take a graph that realizes $S_{0}$,
introduce two new vertices and join one of these two new vertices with all remaining vertices.
Hence, $S_{0}^{\left(1\right)}$ is graphical.
To obtain a graph that realizes $S_{0}^{\left(2\right)}$, we just need to take a graph that realizes 
$S_{0}^{\left(1\right)}$ and once again introduce two new vertices joining one of these new two vertices with all remaining vertices.
If we continue this process inductively, we obtain a graph that realizes $S_{0}^{\left(i\right)}$ for any positive integer $i$.

Now, observe that since each graph realizing $S_{0}^{\left(i\right)}$ ($i\geq1$) has a vertex which is adjacent to all the other vertices, 
it follows that all these graphs are connected.
Thus, it remains to show that each one of these sequences realizes exactly $k$ ($k\geq 1$) graphs.
To see this, let $S_{0}=S_{0}^{\left(0\right)}$ and proceed by induction on the super subscript $i$ of $S_{0}^{\left(i\right)}$ for $i\geq 0$.
First, observe that $S_{0}^{\left(0\right)}$ has the property that there exist exactly $k$ ($k\geq 1$) non-isomorphic graphs that realizes $S_{0}^{\left(0\right)}$ by assumption. 

Next, let $i=l$ ($l\geq 0)$ and assume that there exist exactly $k$ ($k\geq 1$) non-isomorphic graphs realizing $S_{0}^{\left(l\right)}$.
Consider the sequence 
\begin{equation*}
S_{0}^{\left(l+1\right)}: 1,2,\dots,l,l+1,a_{1}+l+1,a_{2}+l+1,\dots,a_{n}+l+1,n+l+1,n+l+2,\dots,n+2l+1 \text{.}
\end{equation*}
and let $ G_{0}^{\left(l+1\right)}$ be any graph that realizes $S_{0}^{\left(l+1\right)}$.
It is now clear that the vertex of degree $n+2l+1$ is adjacent to all other vertices of $V\left(G_{0}^{\left(l+1\right)}\right)$.
It is also true that if we eliminate this vertex, then we obtain a new graph with degree sequence $0, S_{0}^{\left(l\right)}$.
By inductive hypothesis, there exist exactly $k$ ($k\geq 1$) non-isomorphic graphs with degree sequence  $S_{0}^{\left(l\right)}$.
Then Lemma \ref{basic2} yields that there are exactly $k$ ($k\geq 1$) non-isomorphic graphs with degree sequence $0, S_{0}^{\left(l\right)}$, 
and Lemma \ref{basic1} implies that there are exactly $k$ ($k\geq 1$) non-isomorphic graphs realizing $S_{0}^{\left(l+1\right)}$. 
Therefore, the result follows.
\end{proof}

To conclude this section, notice that it is clear that the only graph that realizes the sequence $s:1,1$ is the complete graph $K_{2}$ of order $2$. 
From this observation together with Theorem \ref{main}, the next result found in \cite{IMO} follows as an immediate corollary.

\begin{corollary}
\label{main2}
For all positive integers $n$, there exists a unique graph of order $n$ that realizes the sequence $S_{n}:1,2,\dots,n/2.n/2,n/2+1,n/2+2,\dots,n-1$.
\end{corollary}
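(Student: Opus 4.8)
The plan is to obtain this as a direct specialization of Theorem \ref{main}, the only subtlety being the choice of base sequence and the bookkeeping needed to match indices. Since the sequence $S_{n}$ only makes sense for even $n$, I treat even $n$ throughout. The natural base sequence is $s:1,1$, the degree sequence of the complete graph $K_{2}$ of order $2$, which (as already observed in the text) is realized by $K_{2}$ and nothing else; hence in the language of Theorem \ref{main} we have $k=1$, base order $n=2$, and $a_{1}=a_{2}=1$.

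First I would substitute these values into the general term. Putting $n=2$ and $a_{1}=a_{2}=1$ into
\[
S_{0}^{(i)}:1,2,\dots,i,a_{1}+i,a_{2}+i,n+i,n+i+1,\dots,n+2i-1
\]
yields $1,2,\dots,i,i+1,i+1,i+2,i+3,\dots,2i+1$. Because the construction in Theorem \ref{main} adjoins two new vertices at each stage to a base graph on $2$ vertices, every graph realizing $S_{0}^{(i)}$ has order $n+2i=2i+2$.

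Next I would reindex by writing $N=2i+2$, equivalently $i=N/2-1$. Rewriting the sequence above in terms of $N$ turns it into
\[
1,2,\dots,N/2-1,N/2,N/2,N/2+1,N/2+2,\dots,N-1,
\]
which is exactly $S_{N}$, since $2i+1=2(N/2-1)+1=N-1$. Thus for every even $N\geq 2$ the sequence $S_{N}$ is nothing other than $S_{0}^{(i)}$ with $i=N/2-1\geq 0$, the degenerate case $i=0$ simply returning the base sequence $S_{2}=s:1,1$.

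Finally, Theorem \ref{main} guarantees that each $S_{0}^{(i)}$ is realized by exactly $k$ non-isomorphic connected graphs; since here $k=1$, it follows that $S_{N}$ admits a unique realization for every even $N$, which is the assertion of the corollary. I expect no genuine obstacle: the entire argument reduces to the substitution and reindexing in the two displays above, and verifying that the resulting sequence matches $S_{N}$ term by term is the only point requiring care. This is precisely why the statement is presented as an immediate corollary rather than a theorem with an independent proof.
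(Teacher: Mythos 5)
Your proposal is correct and takes essentially the same route as the paper: specialize Theorem \ref{main} to the base sequence $s:1,1$, uniquely realized by $K_{2}$ (so $k=1$), and identify $S_{N}$ with $S_{0}^{(i)}$ for $i=N/2-1$; the paper simply leaves the substitution and reindexing implicit, which you carry out explicitly (and you rightly note that the statement's ``all positive integers $n$'' should read ``all positive even integers $n$'').
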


In summary, what we have proved in this paper is that if a degree sequence is realized by exactly $k$ ($k\geq 1$) non-isomorphic graphs of order $n$, 
then there exist infinitely many sequences that realize exactly $k$ ($k\geq 1$) non-isomorphic graphs of order $n$ each. 
Furthermore, all these graphs have the additional property that they are all connected.

\subsubsection*{$\emph{Acknowledgment}$}
The authors would like to dedicate this paper to Susana Clara L\'{o}pez Masip who passed away on December 26, 
2022 after a life dedicated to graph theory.

The authors are also gratefully indebted to Yukio Takahashi for his technical assistance.

\end{document}